\newcommand{\mb}{\mathbb}
\newcommand{\mc}{\mathcal}
\newcommand{\p}{\partial}
\newtheorem{theorem}{Theorem}[section]
\newtheorem{definition}[theorem]{Definiton}
\newtheorem{lemma}[theorem]{Lemma}
\begin{document}

\title{Pinching on open manifolds}

\author{Manuel Streil}

\address{Institut f\"{u}r Mathematik\\Universit\"{a}t Augsburg\\86135
Augsburg\\Germany}

\email{manuel\_streil@web.de}

\begin{abstract}
We show that the $2$-jet bundle of local Riemannian metrics on an arbitrary differentiable 
manifold admits a section which pointwise fulfills the curvature relation $\sec(g)=a$ for any
$a\in\mb{R}.$ It follows by Gromov's h-principle for open, invariant differential 
relations that every noncompact differentiable manifold carries arbitrarily pinched 
(incomplete) Riemannian metrics.
\end{abstract}
\maketitle

\section{Introduction}
A basic question in Riemannian geometry is what effect the existence
of a Riemannian metric with particular curvature properties has on the
topology of the underlying manifold. One usually considers complete metrics
and obtains even on an elementary level rather strong restrictions.
For instance, complete manifolds of negative sectional curvature are always
aspherical and all nontrivial elements of the fundamental group have infinite
order. Furthermore a closed manifold does not support two metrics of 
different signed sectional curvatures. We also mention the result of Gromoll
and Meyer \cite{GM} that a complete open manifold of positive sectional curvature is
diffeomorphic to some $\mb{R}^m.$

The situation changes completely if we skip the completeness assumption, which
is automatically imposed on metrics on closed manifolds but is an extra 
condition on open manifolds. Gromov remarks in his thesis \cite{G1} the astonishing fact
that every open manifold carries metrics with strict positive and negative 
sectional curvature.

In this paper we extend this result and prove the existence of arbitrarily
pinched (incomplete) metrics on open manifolds.

\begin{theorem} \label{pinch}
Let $M$ be an open manifold. Given $\delta >0$ and $a\in\mb{R}$ there 
exists a Riemannian metric on $M$ such that all sectional curvatures 
are in $(a-\delta,a+\delta).$
\end{theorem}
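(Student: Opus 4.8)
The plan is to recast ``$\delta$-pinching around $a$'' as an open, diffeomorphism-invariant differential relation, to exhibit a formal solution by hand, and then to quote Gromov's h-principle. Let $S^2_+T^*M\subset S^2T^*M$ be the open sub-bundle of positive definite symmetric $2$-tensors, whose sections are the Riemannian metrics on $M$, and inside its second jet bundle $J^2(S^2_+T^*M)$ consider
\[
\mc{R}_\delta=\bigl\{\,\sigma\in J^2(S^2_+T^*M)\ :\ \sec_\sigma(P)\in(a-\delta,a+\delta)\ \text{for every tangent $2$-plane }P\,\bigr\}.
\]
The sectional curvature $\sec_\sigma(P)$ of a $2$-jet $\sigma$ along a plane $P$ is a universal rational expression in the metric, its inverse and its first and second derivatives, hence depends continuously on $(\sigma,P)\in J^2(S^2_+T^*M)\times_M\mathrm{Gr}_2(TM)$; as the Grassmann bundle $\mathrm{Gr}_2(TM)$ has compact fibres, $\mc{R}_\delta$ is open. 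Naturality of the Riemann curvature tensor under local diffeomorphisms shows that $\mc{R}_\delta$ is invariant under the action of the pseudogroup of local diffeomorphisms of $M$ on $J^2(S^2_+T^*M)$. Since $M$ is open, Gromov's h-principle for open, $\mathrm{Diff}$-invariant relations (\cite{G1}) applies, so it suffices to produce a formal solution, i.e.\ a section $M\to\mc{R}_\delta$.

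To get such a section I would first solve the problem at a single point. On $\{x\in\mb{R}^m:|x|^2<4/|a|\}$ — on all of $\mb{R}^m$ when $a\le 0$ — the conformally flat metric
\[
g_a=\Bigl(1+\tfrac{a}{4}|x|^2\Bigr)^{-2}\bigl(dx_1^2+\dots+dx_m^2\bigr)
\]
has constant sectional curvature $a$, and its $2$-jet at the origin is $g_{ij}(0)=\delta_{ij}$, $\p_k g_{ij}(0)=0$, $\p_k\p_l g_{ij}(0)=-a\,\delta_{ij}\delta_{kl}$. The reason for choosing this particular model is that $g_a$, and therefore its $2$-jet $\sigma_a$ at the origin, is invariant under the action of $O(m)$ on $\mb{R}^m$. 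Now fix any Riemannian metric $h$ on $M$ (it exists since $M$ is paracompact). Transplanting $\sigma_a$ to $M$ through an arbitrary local $h$-orthonormal frame is unambiguous precisely because $\sigma_a$ is $O(m)$-fixed and the transition functions of the orthonormal frame bundle lie in $O(m)$; this yields a global section $s\colon M\to J^2(S^2_+T^*M)$. By construction every sectional curvature of $s(p)$ equals $a$, so $s$ is a section of $\mc{R}_\delta$ (in fact a section of the smaller, closed set of $2$-jets with all sectional curvatures equal to $a$).

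Feeding the formal solution $s$ into Gromov's h-principle for the open, $\mathrm{Diff}$-invariant relation $\mc{R}_\delta$ on the open manifold $M$ produces a holonomic section $\sigma$; its underlying metric $g$ then satisfies $j^2_pg=\sigma(p)\in\mc{R}_\delta$ for every $p\in M$, that is, all sectional curvatures of $g$ lie in $(a-\delta,a+\delta)$, which is the assertion of the theorem.

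I expect the only genuine work to be the construction of the formal solution: one must write down a $2$-jet of a metric whose curvature is exactly the space-form curvature $a$ at a point and then arrange these jets into a global section, and the explicit model $g_a$ handles both issues at once thanks to its full rotational symmetry; after that the h-principle is used as a black box. It should also be stressed why one passes to the $\delta$-neighbourhood at all: the tempting relation requiring $\sec\equiv a$ exactly is closed, not open, so Gromov's theorem does not apply to it, and enlarging it to the open relation $\mc{R}_\delta$ is exactly what makes the scheme work — which is also why the conclusion for incomplete metrics is the weaker, and in this generality unavoidable, statement of being $\delta$-pinched around $a$ rather than of having constant curvature $a$.
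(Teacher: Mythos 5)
Your overall strategy matches the paper's precisely: recast pinching as the open, diffeomorphism-invariant relation $\mathcal{R}_{\delta,a}\subset J^2(S^2_+T^*M)$, quote Gromov's h-principle for open invariant relations on open manifolds, and reduce everything to producing a formal solution. Where you genuinely diverge is in that last step. The paper proves that the fibre $F_a$ of the constant-curvature relation is contractible --- via an explicit deformation retraction built out of normal-coordinate changes, which requires a fairly laborious smoothness check --- and then invokes obstruction theory. You instead pick a single $O(m)$-fixed element $\sigma_a\in F_a$ (the $2$-jet at $0$ of the conformally flat space-form model) and transplant it along a background metric. Your route is shorter and isolates the representation-theoretic fact that actually makes the problem solvable, namely that the linear $O(m)$-action on $F_a$ has a fixed point; the paper's route proves the stronger statement that $F_a$ is contractible, which would be needed if one wanted to exploit the full weak homotopy equivalence in Theorem~\ref{h} rather than mere surjectivity on $\pi_0$.

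There is, however, a genuine gap in your justification that the transplant is well defined. You write that the construction is ``unambiguous precisely because $\sigma_a$ is $O(m)$-fixed and the transition functions of the orthonormal frame bundle lie in $O(m)$,'' but $E^2=J^2(S^2_+T^*M)$ is not associated to the first-order frame bundle: its transition functions are $2$-jets of coordinate changes, so $E^2$ is associated to the second-order frame bundle with structure group the jet group $G^2(m)$. An $h$-orthonormal frame at $p$ fixes only the $1$-jet of a coordinate system; the residual freedom is the kernel $K=\ker\bigl(G^2(m)\to GL(m)\bigr)$, consisting of jets $x\mapsto x+Q(x,x)$, and $\sigma_a$ is not $K$-fixed --- indeed the $K$-action is exactly what changes $\partial_k g_{ij}(0)$, and is what one uses to pass to normal coordinates. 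So transplanting ``through a frame'' without saying more is ambiguous. The fix, and the point your argument implicitly relies on, is to use the Levi-Civita connection of $h$: the $2$-jets of $h$-normal coordinate systems adapted to $h$-orthonormal frames constitute an $O(m)$-reduction of the second-order frame bundle, and only against that reduction does the $O(m)$-invariance of $\sigma_a$ yield a well-defined global section. Once you replace ``through an $h$-orthonormal frame'' by ``read off in $h$-normal coordinates at $p$ adapted to an $h$-orthonormal frame,'' the argument closes. A minor slip: $g_a$ is defined on all of $\mathbb{R}^m$ when $a\ge 0$, not $a\le 0$ (the conformal factor degenerates only on $|x|^2=-4/a$ for $a<0$), though this is immaterial since you only need the $2$-jet at the origin.
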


In other words the geometric significance of the sign of the curvature  and even 
of curvature bounds depends on the completeness of the underlying metric. It turns out 
that the only question concerning curvature which involves geometry in the incomplete 
case is the existence of metrics of constant sectional curvature, since then topological
obstructions are known. For example such manifolds have trivial Pontryagin classes
(cf. Theorem 43 and Corollary 44 in \cite{Sp}).

Theorem \ref{pinch} is an application of a deep differential topological insight of M. Gromov
who greatly generalized in his thesis \cite{G1} Smale-Hirsch-Phillips immersion-submersion theory 
(see \cite{S}, \cite{H}, \cite{Ph}) by proving what is now called the h-principle for 
invariant, open differential relations on open manifolds. 

Roughly speaking, a partial differential relation $\mc{R}$ is any condition 
imposed on the partial derivatives of an unknown function. By substituting derivatives 
by new independent variables one gets an underlying algebraic relation. 
Obviously the existence of a formal solution, i. e. a solution of the corresponding 
algebraic relation, is a necessary condition for the solvability of $\mc{R}.$ It turns out
that under certain cirumstances any formal solution of $\mc{R}$ can be deformed into a
genuine one. 

In general one considers a smooth fibre bundle over the underlying manifold. 
A partial differential relation is a condition imposed on the $r-$jet bundle of 
local sections. Now we could try and construct a section of this $r-$jet bundle which
pointwise fulfills the relation, i.e. a formal solution, and deform it to get a real solution. 
In other words, we reduce the problem to algebraic-topological obstruction theory. 
We refer to \cite{EM} and \cite{G2} for expositions of this technique.

In our case we study the bundle of symmetric bilinear forms the positive definite sections of which
are Riemannian metrics on the underlying manifold. The observation that the sectional
curvature only depends on the $2-$jet of the metric allows us to translate the pinching
problem into a curvature relation, see Definition \ref{rel}. In Lemma \ref{formal} we will prove that 
on any manifold there are formal solutions with {\it constant} curvature. Then, as an application 
of Gromov's h-principle, Theorem \ref{h} implies the existence of arbitrarily pinched metrics.

\section{Some basic facts concerning jets}

Given a $(C^{\infty}$-)smooth fibre bundle $q: V\rightarrow M$ over a smooth manifold $M$ of dimension $m$ we
identify two local smooth sections $\sigma_1$ and $\sigma_2$ defined in a neighbourhood of some point
$p\in M$ if in local local coordinates on $M$ and $V$ they have the same partial derivatives
up to order $r$ at $p.$ An equivalence class $[(\sigma,p)]=j^r_p \sigma$ under this relation 
is called $r$-jet of $\sigma$ at $p.$ We denote by $V^r$ the space of $r$-jets of local sections.
A partial differential relation of order $r$ is a subset $\mc{R}\subset V^r.$

In the following we consider the bundle $q:E\rightarrow M$ of the positive definite 2-forms on $M,$ which
is an open subbundle of the bundle $q: S^2 T^\ast M\rightarrow M$ of the symmetric bilinear forms
on $M.$ Having chosen a chart neighbourhood $U\subset M$ we have trivialisations 
$q^{-1}(U)\cong U\times S(m)$ and  $q^{-1}(U)\cong U\times P(m)$ where we denote by 
$S(m)\subset \mb{R}^{m\times m}$ the vectorspace ($\dim S(m)=:d$) of symmetric $m\times m$ 
matrices and by $P(m)\subset S(m)$ the open subset of positive definite $m\times m$ matrices.  

We consider the projection map
\begin{eqnarray*} 
q^2: E^2 & \rightarrow & M\\
j^2_p g & \mapsto & p.
\end{eqnarray*}
Let $p\in M$ and $j^2_p g\in E^2$ a 2-jet represented by a local Riemannian metric $g.$ 
Using a chart $(U,\Phi,x^1,...,x^m)$ around $p,$ we obtain a local description 
\[(g_{ij}^\Phi(\Phi(p)),\frac{\p g_{ij}^\Phi }{\p x^k}(\Phi(p)),\frac{\p^2 g_{ij}^\Phi}{\p x^l\p x^k} (\Phi(p)))
\in P(m)^2:=P(m)\times\mb{R}^{dm}\times\mb{R}^{\frac{dm(m+1)}{2}}\]
of $j^2_p g$ taking symmetries of the partial derivatives into account. Vice versa, given
$(g_{ij}^\Phi,g_{ijk}^\Phi,g_{ijkl}^\Phi)\in P(m)^2$ we choose the associated taylor polynom
$h_{ij}$ near $\Phi(p)$ such that 
\[(h_{ij}(\Phi(p)),\frac{\p h_{ij} }{\p x^k}(\Phi(p)),\frac{\p^2 h_{ij}}{\p x^l\p x^k} (\Phi(p)))=
(g_{ij}^\Phi,g_{ijk}^\Phi,g_{ijkl}^\Phi).\]
As a consequence there is a 1-1 correspondence 
\[(q^2)^{-1}(U)\stackrel{1-1}{\longleftrightarrow}U\times P(m)^2.\]
Now let $(W,\Psi,y^1,...,y^m)$ be another chart defined near $p.$ Keeping in mind that
\[g_{ij}^\Psi=\sum_{k,l}\frac{\p x^k}{\p y^i}\frac{\p x^l}{\p y^j}g_{kl}^\Phi\]
a change of coordinates on $M$ obviously induces a linear transformation of $P(m)^2.$ This allows us to give
$E^2$ a canonical structure of a smooth manifold and we have proven

\begin{lemma}
The space $E^2$ of 2-jets of local Riemannian metrics on $M$ defines a smooth fibre bundle
\begin{eqnarray*} 
q^2: E^2 & \rightarrow & M\\
j^2_p g & \mapsto & p
\end{eqnarray*}
with linear transformations of the fibre 
$P(m)^2=P(m)\times\mb{R}^{dm}\times\mb{R}^{\frac{dm(m+1)}{2}}.$
\end{lemma}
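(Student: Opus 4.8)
The plan is to obtain $E^2$ by the usual gluing construction for fibre bundles, feeding in the local identifications $(q^2)^{-1}(U)\cong U\times P(m)^2$ found above together with a verification that the resulting transition maps are, fibrewise, \emph{linear} automorphisms of $P(m)^2$ depending smoothly on the base point. First I would fix, for a chart $(U,\Phi,x^1,\dots,x^m)$, the bijection $\theta_U\colon (q^2)^{-1}(U)\to U\times P(m)^2$ sending a $2$-jet to the triple of Taylor coefficients $(g_{ij}^\Phi,g_{ijk}^\Phi,g_{ijkl}^\Phi)$; this manifestly covers $\mathrm{id}_U$ and is a bijection by the $1$-$1$ correspondence established above. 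For a second chart $(W,\Psi,y^1,\dots,y^m)$ with $U\cap W\neq\emptyset$ the transition map $\theta_W\circ\theta_U^{-1}$ on $(U\cap W)\times P(m)^2$ then has the form $(q,v)\mapsto(q,A_{WU}(q)\,v)$ for a map $A_{WU}$ from $U\cap W$ into the self-bijections of $P(m)^2$.

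The heart of the argument is the identification of $A_{WU}(q)$. I would differentiate the tensorial transformation law $g_{ij}^\Psi=\sum_{k,l}\frac{\p x^k}{\p y^i}\frac{\p x^l}{\p y^j}g_{kl}^\Phi$ once and twice in the $y$-coordinates. By the product and chain rules one finds that the new coefficients $(g_{ij}^\Psi,g_{ijk}^\Psi,g_{ijkl}^\Psi)$ are obtained from the old ones $(g_{kl}^\Phi,g_{klm}^\Phi,g_{klmn}^\Phi)$ by contracting the latter against partial derivatives of the coordinate change $\Phi\circ\Psi^{-1}$ of order at most three, evaluated at the point in question. Two features of this computation are what make the lemma true. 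First, each term is homogeneous of degree one in the Taylor data of $g$ — in particular it carries no constant term — so $A_{WU}(q)$ extends to a genuine linear isomorphism of the ambient vector space $S(m)\times\mb{R}^{dm}\times\mb{R}^{\frac{dm(m+1)}{2}}$. Second, the zeroth-order block of $A_{WU}(q)$ is the congruence $(g_{ij}^\Phi)\mapsto J^{\mathrm T}(g_{ij}^\Phi)J$, with $J$ the Jacobian of $\Phi\circ\Psi^{-1}$ at $q$, which preserves positive definiteness; hence $A_{WU}(q)$ carries $P(m)^2$ bijectively onto $P(m)^2$. Smoothness of the coordinate change gives that $q\mapsto A_{WU}(q)$ is smooth.

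It then remains to assemble the pieces in the standard way. Since every $\theta_U$ is a bijection onto $U\times P(m)^2$ induced by one and the same intrinsically defined set $E^2$, the family $\{A_{WU}\}$ automatically satisfies $A_{UU}\equiv\mathrm{id}$ and the cocycle identity $A_{ZW}(q)A_{WU}(q)=A_{ZU}(q)$ on triple overlaps. The fibre bundle construction lemma then equips $E^2$ with a unique smooth structure for which each $\theta_U$ is a diffeomorphism and $q^2\colon E^2\to M$ is a locally trivial smooth fibre bundle with fibre $P(m)^2$ and structure group contained in the subgroup of $GL\big(S(m)\times\mb{R}^{dm}\times\mb{R}^{\frac{dm(m+1)}{2}}\big)$ preserving $P(m)^2$ — which is exactly the asserted statement, the transition functions being linear by construction. (Alternatively one may simply observe that $E$ is an open subbundle of the vector bundle $S^2T^{\ast}M$, so $E^2$ is an open subbundle of the $2$-jet bundle of $S^2T^{\ast}M$, itself a vector bundle, and read off the same conclusion.) The one real obstacle is the bookkeeping in the middle step: one must check that after differentiating the transformation law twice, no surviving term is either independent of, or nonlinear in, the collection $(g_{kl}^\Phi,g_{klm}^\Phi,g_{klmn}^\Phi)$ — this is precisely what distinguishes linear transition functions from merely affine or polynomial ones.
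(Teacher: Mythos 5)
Your proof is correct and follows the same strategy as the paper: introduce the chart-dependent bijection onto $U\times P(m)^2$ via Taylor coefficients, observe that the tensorial transformation law $g_{ij}^\Psi=\sum_{k,l}\frac{\p x^k}{\p y^i}\frac{\p x^l}{\p y^j}g_{kl}^\Phi$ (and its first and second derivatives) yields transition maps that are linear in the jet data, and then invoke the standard gluing construction. You supply more detail than the paper does on why the transition maps are genuinely linear (block--lower-triangular, zeroth block a congruence by the Jacobian) where the paper simply calls this ``obvious''; your parenthetical alternative --- that $E^2$ is an open subbundle of the $2$-jet bundle of the vector bundle $S^2T^{\ast}M$ --- is a clean shortcut the paper does not make explicit, but leads to the same conclusion.
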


\section{The curvature relation}
Let $\tau=j^2_p g$ be a 2-jet of a local Riemannian metric $g$ at $p\in M$ and $(U,\Phi,x^1,...,x^m)$ a chart around $p.$
We may assume that $g$ is defined on $U.$ In local coordinates $\tau$ is associated to
\[(\Phi(p),\tau_{ij},\tau_{ijk},\tau_{ijkl})=
(\Phi(p),g^\Phi_{ij}(\Phi(p)),\p_k g^\Phi_{ij}(\Phi(p)),\p_l\p_k g^\Phi_{ij}(\Phi(p))).\]
On $U$ the metric $g$ induces the Levi-Civita connection $\nabla$ given locally by means of the 
Christoffelsymbols defined by
\[\nabla_{\frac{\p}{\p x^i}}\frac{\p}{\p x^j} = \sum_{k=1}^m \Gamma_{ij}^k\cdot\frac{\p}{\p x^k}\]
or more explicitly
\[\Gamma_{ij}^k=\frac{1}{2}\sum_{l=1}^m (g^\Phi)^{lk}(\p_i g^\Phi_{jl}+\p_j g^\Phi_{li}-\p_l g^\Phi_{ij})\]
where $((g^\Phi)^{ij})$ is the inverse of the matrix $(g^\Phi_{ij}).$ We note that the entries $(g^\Phi)^{ij}$ are 
rational functions in terms of the $g^\Phi_{ij}.$ It follows that the components $R_{ijks}$ of the curvature tensor 
given by
\[g\left(R\left(\frac{\p}{\p x^i},\frac{\p}{\p x^j}\right)\frac{\p}{\p x^k},\frac{\p}{\p x^s}\right)=
g\left(\nabla_\frac{\p}{\p x^j}\nabla_\frac{\p}{\p x^i}\frac{\p}{\p x^k}
-\nabla_\frac{\p}{\p x^i}\nabla_\frac{\p}{\p x^j}\frac{\p}{\p x^k},\frac{\p}{\p x^s}\right)\]
at $p$ are completely determined by $\tau$ and independent of the choice of the representative $g.$
This implies that the sectional curvature 
\[\sec(V)=\frac{g(R(v,w)v,w)}{g(v,v)\cdot g(w,w)-g(v,w)^2}\]
of a plane $V\subset T_p M$ spanned by two linearly independent vectors $v,w\in T_p M$ only depends
on $\tau.$ In other words, we have a well-defined notion of sectional curvature of 2-jets of local
Riemannian metrics. 

\begin{definition}
Let $\tau\in E^2$ be a 2-jet at $p\in M$ of a Riemannian metric $g$ on a neighbourhood $U$ of $p$ and 
$V$ a plane in $T_p M.$ We define the sectional curvature $\sec_\tau V$ as the sectional curvature 
$\sec_{(U,g)}V$ with respect to the Levi-Civita connection induced by $g$ on $U.$ 
\end{definition}

\begin{definition} \label{rel}
Given $a\in\mb{R}$ and $\delta \geq 0$ the curvature relation $\mc{R}_{\delta,a}$ is defined as the subset
\begin{eqnarray*}
\mc{R}_{\delta,a}= \{\tau\in E^2: \sec_\tau V\in (a-\delta,a+\delta)\, \mbox{for all planes}\, V\subset T_{q^2(\tau)}M\}
\end{eqnarray*}
where we think of $(a,a)$ as $\{a\}$ if $\delta=0$ by abuse of notation.
\end{definition}
We call a continuous section $\alpha$ of $E^2$ satisfying $\alpha(M)\subset\mc{R}_{\delta,a}$ a {\bf formal} solution to
the curvature relation $\mc{R}_{\delta,a}.$ A formal solution $\alpha$ is {\bf holonomic} if $j^2 g=\alpha$ for some section $g$ of $E.$
In other words, $g$ is a Riemannian metric on $M$ such that all sectional curvatures lie in $(a-\delta,a+\delta).$
We refer to \cite{EM} and \cite{G2} for these notions in the broader context of partial differential relations.

Let $M_1$ and $M_2$ be two differentiable manifolds of the same dimension. As above we define bundles
$q_i: E_i\rightarrow M_i$ and $q^2_i:E^2_i\rightarrow M_i$ as well as curvature relations $\mc{R}^i_{\delta,a}$
with $i\in\{1,2\}.$ A local diffeomorphism $f:U_1\rightarrow U_2$ between open subsets $U_1\subset M_1$ and $U_2\subset M_2$
induces in a canonical way a map 
\[f_\ast:E^2_1|U_1\rightarrow E^2_2|U_2\] 
as follows: Let $\tau=j^2_p g\in E^2_1.$ We may assume that
its representative $g$ is defined on $U_1.$ The push-forward $f_\ast g$ yields a metric on $U_2$ via
\[f_\ast g(v,w)=g(f^\ast v,f^\ast w)\]
for all $v,w \in T_{\tilde{p}} M_2$ and arbitrary $\tilde{p} \in U_2.$ Now we define
\[f_\ast \tau :=j^2_{f(p)}f_\ast g.\]
One readily checks that $f_\ast\tau$ is well-defined. 
Suppose that $\tau=j^2_p g\in\mc{R}^1_{\delta,a}.$ Due to the fact that $f:(U_1,g)\rightarrow (U_2,f_\ast g)$ is an
isometry we have
\[\sec_{f_\ast\tau}E(v,w)=\sec_\tau E(f^\ast v,f^\ast w)\]
where $E(v,w)$ is the plane spanned by two linearly independent vectors $v,w\in T_{f(p)}U_2.$ 
In other words, $f_\ast\tau\in\mc{R}^2_{\delta,a}.$ In this sense curvature relations are invariant under local
diffeomorphisms.
\begin{lemma}
The restriction $q^2:\mc{R}_{\delta,a}\rightarrow M$ defines a subbundle of $q^2:E^2\rightarrow M.$ 
This bundle is open if $\delta >0.$
\end{lemma}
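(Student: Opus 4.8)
The plan is to establish two things: first, that $\mc{R}_{\delta,a}$ is a \emph{subbundle} of $q^2:E^2\rightarrow M$, meaning that locally it is a product of an open chart with a fixed model fibre, and second, that when $\delta>0$ this model fibre is an \emph{open} subset of the total model fibre $P(m)^2$. The key observation is that both statements are local over $M$ and, in fact, local over $P(m)^2$ as well, so it suffices to work in a single chart $(U,\Phi,x^1,\dots,x^m)$ together with the trivialisation $(q^2)^{-1}(U)\cong U\times P(m)^2$ constructed in the previous section.

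First I would make explicit the function that measures curvature in local coordinates. Over the chart $U$, a point of the fibre is a tuple $(g_{ij},g_{ijk},g_{ijkl})\in P(m)^2$, and for a pair of linearly independent vectors $v,w$ the sectional curvature $\sec_\tau E(v,w)$ is, by the computation in Section 3, a rational function of these fibre coordinates (the Christoffel symbols are rational in the $g_{ij}$ since the inverse matrix entries $(g^\Phi)^{ij}$ are rational, the curvature components $R_{ijks}$ are polynomial in the Christoffel symbols and their first derivatives, hence rational in the fibre coordinates, and the denominator $g(v,v)g(w,w)-g(v,w)^2$ is positive by positive definiteness and the Cauchy–Schwarz inequality since $v,w$ are independent). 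Crucially this rational function has nonvanishing denominator on all of $P(m)^2$, so it is a genuine smooth, indeed continuous, function of the fibre coordinates; moreover, by compactness of the Grassmannian of $2$-planes one may equally well regard $\min_V \sec_\tau V$ and $\max_V \sec_\tau V$ as continuous functions of the fibre coordinate. The fibre of $\mc{R}_{\delta,a}$ over a point of $U$ is then precisely the set $\{(g_{ij},g_{ijk},g_{ijkl})\in P(m)^2 : a-\delta < \min_V\sec < \max_V\sec < a+\delta\}$ (with the convention for $\delta=0$ that this is the locus where all sectional curvatures equal $a$), and this set does \emph{not} depend on the point of $U$ — it is a fixed subset $\mc{R}_{\delta,a}^{\mathrm{mod}}\subset P(m)^2$. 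That gives the local product structure $(q^2)^{-1}(U)\cap\mc{R}_{\delta,a}\cong U\times\mc{R}_{\delta,a}^{\mathrm{mod}}$.

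Next I would verify that these local trivialisations are compatible, i.e. that $\mc{R}_{\delta,a}$ really is a subbundle and not merely a union of local products. This is exactly the invariance under local diffeomorphisms established just above the statement: the transition maps of $E^2$ over overlapping charts are induced by coordinate changes on $M$, which are in particular local diffeomorphisms, and we have shown $f_\ast\tau\in\mc{R}^2_{\delta,a}$ whenever $\tau\in\mc{R}^1_{\delta,a}$ (applied here with $M_1=M_2=M$). Hence the transition maps of $E^2$ carry $U\times\mc{R}_{\delta,a}^{\mathrm{mod}}$ into $U'\times\mc{R}_{\delta,a}^{\mathrm{mod}}$ over intersections, so $q^2:\mc{R}_{\delta,a}\rightarrow M$ is a locally trivial subbundle of $q^2:E^2\rightarrow M$.

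Finally, for the openness statement when $\delta>0$: since $E^2$ is a manifold with the local charts $U\times P(m)^2$, it suffices to show $\mc{R}_{\delta,a}^{\mathrm{mod}}$ is open in $P(m)^2$. But $P(m)$ is open in $S(m)$, so $P(m)^2$ is open in the full coordinate space, and $\mc{R}_{\delta,a}^{\mathrm{mod}}$ is the preimage of the open interval $(a-\delta,a+\delta)$ under the continuous maps $(g_{ij},g_{ijk},g_{ijkl})\mapsto\min_V\sec$ and $\mapsto\max_V\sec$; the intersection of these two open preimages is open, and for $\delta>0$ the defining condition is the strict one, so $\mc{R}_{\delta,a}^{\mathrm{mod}}$ is open, whence $\mc{R}_{\delta,a}$ is open in $E^2$. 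I expect the only mildly delicate point to be justifying that $\min_V\sec$ and $\max_V\sec$ are continuous in the fibre coordinate — this follows from joint continuity of $(\tau,V)\mapsto\sec_\tau V$ on $P(m)^2$ times the (compact) Grassmannian $G_2(\mb{R}^m)$ together with the standard fact that minimising or maximising a jointly continuous function over a compact parameter space yields a continuous function of the remaining variable; everything else is bookkeeping with the trivialisations already in place.
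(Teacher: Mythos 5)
Your proof is correct and follows essentially the same route as the paper: trivialise over a chart, observe the curvature condition defines a fixed model fibre in $P(m)^2$ independent of the base point, use invariance of sectional curvature under coordinate changes to glue the local trivialisations into a subbundle, and then deduce openness for $\delta>0$ from continuity of the curvature function together with compactness of the space of $2$-planes. The only cosmetic differences are that you phrase openness via $\min_V\sec$ and $\max_V\sec$ over the Grassmannian while the paper works directly with the continuous function $\eta$ on $P(m)^2\times V_{m,2}$, and you spell out the rationality of the curvature formula which the paper leaves implicit.
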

\begin{proof}
Let $p\in M$ and $(U,\Phi)$ a chart of $M$ near $p.$ We obtain a local trivialisation
$(q^2)^{-1}(U)\cong U\times P(m)^2$ of the bundle $q^2: E^2\rightarrow M.$ Any 2-jet $\tau\in\mc{R}_{\delta,a}$ has a local
representation
\[(p,\tau_{ij},\tau_{ijk},\tau_{ijkl})\in U\times P(m)^2.\]
We note that the sectional curvature depends only on $(\tau_{ij},\tau_{ijk},\tau_{ijkl}).$ 
Consequently, $\tilde{\tau}=(\tilde{p},\tau_{ij},\tau_{ijk},\tau_{ijkl})\in\mc{R}_{\delta,a}$ for any $\tilde{p}\in U.$
We define $F_{\delta,a}$ as a subset of $P(m)^2$ in the following way: $(\tau_{ij},\tau_{ijk},\tau_{ijkl})\in F_{\delta,a}$ if and
only if for some (and hence any) $\tilde{p}\in U$ we have
\[\tilde{\tau}=(\tilde{p},\tau_{ij},\tau_{ijk},\tau_{ijkl})\in\mc{R}_{\delta,a}.\]
In other words, 
\[(q^2)^{-1}(U)\cap \mc{R}_{\delta,a}\cong U\times F_{\delta,a}.\]
As a result, $q^2:\mc{R}_{\delta,a}\rightarrow M$ is trivial over charts of $M.$ A change of coodinates on $M$ induces
a linear transformation of $F_{\delta,a}\subset P(m)^2,$ because the sectional curvature is independent of the choice of
local coordinates.\\
Now assume $\delta>0.$ We identify $T_p M$ with $\mb{R}^m$ and parametrize the set of all planes in $T_p M$ by the
Stiefel manifold $V_{m,2}$ of the orthonormal 2-frames $(v,w)\in\mb{R}^m.$ Furthermore, let $E(v,w)$ be the plane in $\mb{R}^m$
spanned by two linearly independent vectors $v$ and $w.$
The function 
\begin{eqnarray*}
\eta: P(m)^2\times V_{m,2} & \rightarrow & \mb{R}\\
(\sigma',(v,w)) & \mapsto & \sec_{(p,\sigma)}(E(v,w))
\end{eqnarray*}
is continuous and $V_{m,2}$ is compact. It follows that $F_{\delta,a}$ is an open subset of $P(m)^2.$
\end{proof}
Let $\delta >0.$ We denote by $\Gamma\mc{R}_{\delta,a}$ the space of formal solutions equipped with the
compact-open topology and write 
\[\Gamma_{\mc{R}_{\delta,a}}E=\{g\in\Gamma^\infty(E):j^2 g\in\mc{R}_{\delta,a}\}\]
for the space of smooth sections of E, i.e. Riemannian metrics on M, such that all sectional curvatures lie
in $(a-\delta,a+\delta).$ 
The map $j^2:\Gamma_{\mc{R}_{\delta,a}}E\rightarrow\Gamma\mc{R}_{\delta,a}$ induces the weak $C^2-$topology on
$\Gamma_{\mc{R}_{\delta,a}}E.$

So far we have discussed all technical notions we need to apply Gromov's general h-principle for open,
invariant relations on open manifolds to our special case of curvature relations (cf. Theorem 3.12 in \cite{A}):
\begin{theorem} \label{h}
Let $M$ be an open manifold and $\mc{R}_{\delta,a}$ a curvature relation with $\delta >0$. Then
\[j^2:\Gamma_{\mc{R}_{\delta,a}}E\rightarrow\Gamma\mc{R}_{\delta,a}\]
is a weak homotopy equivalence.
\end{theorem}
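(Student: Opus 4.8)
The plan is to obtain Theorem \ref{h} as a direct application of Gromov's h-principle for open, $\mathrm{Diff}$-invariant differential relations over open manifolds, in the parametric form quoted as Theorem 3.12 in \cite{A}. To invoke that theorem for the data $(q^2: E^2\to M,\ \mc{R}_{\delta,a})$ one must check four things: that $q:E\to M$ is a \emph{natural} bundle, so that the pseudogroup of local diffeomorphisms of $M$ acts on $E$ and hence on $E^2$; that $\mc{R}_{\delta,a}\subset E^2$ is open; that it is invariant under this action; and that $M$ is an open manifold. Granting these, the cited theorem asserts precisely that $j^2:\Gamma_{\mc{R}_{\delta,a}}E\to\Gamma\mc{R}_{\delta,a}$ is a weak homotopy equivalence, so the proof reduces to assembling the facts established in the previous two sections and citing Gromov.

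First I would record the invariance. The bundle $E\subset S^2T^\ast M$ of positive definite symmetric $2$-forms is associated to the frame bundle of $M$ and is therefore natural: every (local) diffeomorphism $f$ of $M$ acts on local sections of $E$ by push-forward $g\mapsto f_\ast g$, and, after passing to $2$-jets, on $E^2$ through the map $f_\ast: E^2|U_1\to E^2|U_2$ constructed above. Since $f:(U_1,g)\to(U_2,f_\ast g)$ is an isometry, sectional curvature transforms equivariantly, $\sec_{f_\ast\tau}E(v,w)=\sec_\tau E(f^\ast v,f^\ast w)$; hence $f_\ast$ carries $\mc{R}^1_{\delta,a}$ into $\mc{R}^2_{\delta,a}$. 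Taking $M_1=M_2=M$, this is exactly the invariance of $\mc{R}_{\delta,a}$ under the pseudogroup of local diffeomorphisms of $M$ that Gromov's theorem requires; equivalently, the algebraic fibre $F_{\delta,a}\subset P(m)^2$ is carried into itself by all the linear coordinate-change transformations of $P(m)^2$, consistent with the subbundle structure already proven.

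Next I would invoke openness: by the preceding lemma, for $\delta>0$ the subset $\mc{R}_{\delta,a}\subset E^2$ is an \emph{open} subbundle — this is exactly where the hypothesis $\delta>0$ enters, via continuity of $\eta$ together with compactness of the Stiefel manifold $V_{m,2}$. For $\delta=0$ the relation is closed and the argument breaks down, as it must, since metrics of constant sectional curvature do satisfy genuine topological constraints (cf. the introduction). Finally, $M$ is open by hypothesis, so all four conditions hold.

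With the hypotheses verified, Theorem 3.12 of \cite{A} applies to the open, invariant relation $\mc{R}_{\delta,a}$ over the open manifold $M$ and yields the parametric h-principle: the $2$-jet prolongation $j^2:\Gamma_{\mc{R}_{\delta,a}}E\to\Gamma\mc{R}_{\delta,a}$, with the weak $C^2$-topology on the source and the compact-open topology on the target, is a weak homotopy equivalence. I do not expect a serious obstacle here: there is essentially no computation to perform, and the whole content of the argument is the bookkeeping needed to fit our bundle-and-relation setup to the hypotheses of Gromov's theorem — the only delicate point being the naturality of $E$ and the resulting $\mathrm{Diff}$-invariance of $\mc{R}_{\delta,a}$, which has already been arranged in the construction of $E^2$ and in the discussion of $f_\ast$ preceding the lemma.
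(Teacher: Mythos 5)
Your proposal matches the paper's treatment exactly: the paper states Theorem~\ref{h} without a separate proof, as a direct application of Gromov's h-principle (quoted as Theorem~3.12 in \cite{A}), with the necessary hypotheses — openness of $\mc{R}_{\delta,a}$ for $\delta>0$, invariance under the pseudogroup of local diffeomorphisms via $f_\ast$, and $M$ open — already verified in the preceding lemma and discussion. You have simply made the bookkeeping explicit; there is no substantive difference in approach.
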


In other words, if we can prove that there exist formal solutions we are done. Surjectivity on $\pi_0$
then yields arbitrarily pinched Riemannian metrics.

\begin{lemma} \label{formal}
The space $\Gamma\mc{R}_{\delta,a}$ of formal solutions is nonempty for any $a\in\mb{R}$ and $\delta\geq 0.$
\end{lemma}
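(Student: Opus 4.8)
The plan is to exhibit, for each point $p\in M$ and in a chart around $p$, an explicit $2$-jet of a metric whose sectional curvatures are all \emph{exactly} $a$, and to show these jets can be chosen to vary continuously (indeed smoothly) in $p$, thereby producing a global section of $\mc{R}_{0,a}\subset\mc{R}_{\delta,a}$. The key observation is that whether a $2$-jet lies in $\mc{R}_{\delta,a}$ depends only on the fibre coordinates $(\tau_{ij},\tau_{ijk},\tau_{ijkl})\in P(m)^2$ and not on the basepoint, as established in the preceding lemma; so it suffices to pin down one suitable point of $F_{0,a}\subset P(m)^2$ and then patch.

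First I would recall the standard local models of constant curvature. For $a=0$ take the flat metric: in a chart choose $g_{ij}\equiv\delta_{ij}$ with all first and second derivatives zero; the associated constant Taylor polynomial $h_{ij}=\delta_{ij}$ has $j^2_p h$ with vanishing curvature, so this jet lies in $F_{0,0}$. For $a\neq 0$ one uses the well-known explicit form of the round/hyperbolic metric, e.g. in geodesic normal coordinates the expansion $g_{ij}(x)=\delta_{ij}-\tfrac{a}{3}\bigl(|x|^2\delta_{ij}-x^i x^j\bigr)+O(|x|^3)$; truncating to second order gives a polynomial $h_{ij}$ of degree $2$ whose $2$-jet at the origin is $\delta_{ij}$ for the value, $0$ for the first derivatives, and the prescribed symmetric array for the second derivatives. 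A direct (but routine) computation with the Christoffel symbols and the curvature formula from the previous section shows that this $2$-jet has all sectional curvatures equal to $a$ at the origin, hence lies in $F_{0,a}$; crucially the value and first-derivative parts are the \emph{standard} ones, independent of $a$.

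Next I would globalise. Since the fibre value of this model jet in the chart trivialisation is $(\delta_{ij},0,c_{ijkl}(a))$ with $c$ a fixed constant array, I want to show the corresponding local sections, defined chart-by-chart, glue to a continuous global section. The cleanest route is to fix a Riemannian background metric $\bar g$ on $M$ (any metric exists by a partition of unity) and note that $\bar g$ is itself a smooth section of $E$; then $j^2\bar g$ is a smooth section of $E^2$, and I modify it to a section of $\mc{R}_{0,a}$ by a pointwise construction: at each $p$, pass to $\bar g$-orthonormal coordinates (or simply use the exponential chart of $\bar g$) and declare $\alpha(p)$ to be the constant-curvature-$a$ model $2$-jet written in those coordinates. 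One checks that the transition rule for $P(m)^2$ under change of charts is exactly the one induced by the tensorial transformation of $g_{ij}$ and its derivatives, and since the sectional curvature of a $2$-jet is a diffeomorphism invariant (as shown in the discussion of $f_\ast$ preceding Theorem \ref{h}), the value $\sec_{\alpha(p)}V\equiv a$ is preserved, so $\alpha(M)\subset\mc{R}_{0,a}\subset\mc{R}_{\delta,a}$; smoothness of $\alpha$ follows from smoothness of $\bar g$ and of the exponential map.

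The main obstacle is the globalisation step rather than the local model: one must be careful that the assignment $p\mapsto\alpha(p)$ is genuinely continuous, i.e. that the $2$-jet one writes down in one chart transforms correctly into the one written in an overlapping chart. A convenient way to sidestep chart-dependence entirely is to observe that the space of $2$-jets at $p$ realising constant sectional curvature $a$ is nonempty and the $O(m)$-action on the fibre (coming from orthogonal coordinate changes fixing $p$) together with the full linear transition group acts transitively enough that choosing a smooth frame of $\bar g$ locally yields a continuous selection; alternatively, since $E^2\to M$ is a fibre bundle and $\mc{R}_{\delta,a}\to M$ a subbundle with nonempty fibres, the only real content is that the fibre $F_{\delta,a}$ is nonempty, which the explicit model provides, and a continuous section of a bundle over a manifold whose fibre is (nonempty and, for $\delta>0$, open) always exists once one has a continuous section of the ambient bundle $E^2$ — namely $j^2\bar g$ composed with a fibrewise deformation onto $F_{\delta,a}$. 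Either way the essential input is the explicit constant-curvature $2$-jet, and the rest is bundle-theoretic bookkeeping.
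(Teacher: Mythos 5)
Your primary construction --- fixing a background metric $\bar g$, using the $\bar g$-exponential chart at each $p$, and placing there the explicit constant-curvature Taylor model $2$-jet $(\delta_{ij},0,c_{ijkl}(a))$ --- is correct and genuinely different from the paper's. The paper does \emph{not} build the section directly; it proves the stronger statement that the fibre $F_a=F_{0,a}$ is \emph{contractible}, by a careful smooth deformation retraction (normalize any jet to $(\delta_{ij},0,\tilde\tau_{ijkl})$ via normal coordinates, linearly interpolate the second-derivative block, and use the fact that at a jet with $g_{ij}(0)=\delta_{ij}$, $\p_k g_{ij}(0)=0$ the curvature tensor is \emph{linear} in $\p_l\p_k g_{ij}(0)$), and then invokes elementary obstruction theory. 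Your route is more direct and elementary, at the cost of only yielding nonemptiness of $\Gamma\mc{R}_{0,a}$; the paper's contractibility would also feed higher homotopy information into Theorem \ref{h}.

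Two things need repair, though. First, your construction must be well-defined: the $\bar g$-exponential chart at $p$ depends on a choice of $\bar g$-orthonormal frame of $T_pM$, and different choices differ by $O(m)$. What you need --- and what you gesture at without quite saying --- is not that ``$O(m)$ acts transitively enough'' but that the model jet $(\delta_{ij},0,c_{ijkl}(a))$ is an $O(m)$-\emph{fixed point} of the induced action on $P(m)^2$ (this follows because the space form of curvature $a$ has isotropy group $O(m)$ at each point, or by verifying directly that $\delta_{ij}-\tfrac{a}{3}(|x|^2\delta_{ij}-x^ix^j)$ is rotation-invariant). With that, $\alpha(p)$ is independent of the frame choice, and smoothness in $p$ follows by using a local smooth $\bar g$-orthonormal frame. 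Second, the ``alternatively'' at the end is false: it is \emph{not} true that a subbundle with nonempty (even open) fibres admits a section as soon as the ambient bundle does --- the nonvanishing-vector-field subbundle of $TM$ is a counterexample on closed manifolds. The ``fibrewise deformation onto $F_{\delta,a}$'' you invoke is precisely the nontrivial input the paper constructs (a deformation retraction of $P(m)^2$ onto a point of $F_a$); it cannot be assumed. Drop that alternative and rely on the direct construction.
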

\begin{proof}
It suffices to show that $\mc{R}_{0,a}\subset\mc{R}_{\delta,a}$ admits formal solutions. We will prove
that the fibre $F_{0,a}=:F_a$ smoothly deformation retracts to an arbitrary element of $F_a.$

Note that $F_a$ is nonempty. In a neighbourhood of $0\in\mb{R}^m$ we choose a Riemannian metric $g$ with
constant sectional curvature $a.$ Then
\[\tau'=(\tau_{ij},\tau_{ijk},\tau_{ijkl})=(g_{ij}(0),\p_k g_{ij}(0),\p_l\p_k g_{ij}(0))\in F_a.\]
Let $\tau$ be the Taylor polynom which represents the 2-jet $(0,\tau')\in\mb{R}^m\times F_a.$ We
regard $\tau$ as a Riemannian metric defined in a neighbourhood of $0\in\mb{R}^m.$

Now we choose a reference basis $B=\{b_1,...,b_m\}$ of $\mb{R}^m.$ Applying the Gram-Schmidt procedure yields
an orthonormal basis $B_\tau$ w.r.t. $\tau(0).$ We change to normal coordinates centered at $0$ w.r.t. $\tau$
and $B_\tau,$ i.e. we identify $T_0\mb{R}^m$ with $\mb{R}^m$ via $B_\tau,$ and the exponential map induces a 
linear and invertible transformation
\begin{eqnarray*}
\tilde{L}: F_a & \rightarrow & F_a \\
           \tau' & \mapsto & \tilde{\tau}'=(\delta_{ij},0,\tilde{\tau}_{ijkl}),
\end{eqnarray*}
where $\delta_{ij}=1$ if $i=j$ and $\delta_{ij}=0$ if $i\neq j.$

Let $\psi'\in P(m)^2$ and $\psi$ the Taylor polynom associated to $(0,\psi')\in\mb{R}^m\times P(m)^2.$   
Like $\tau$ we think of $\psi$ as a Riemannian metric defined near $0\in\mb{R}^m.$ Then
\[\psi_t=t\cdot\tau+(1-t)\cdot\psi,\,t\in[0,1]\]
is a Riemannian metric in a neighbourhood of $0\in\mb{R}^m$ with $\psi_0=\psi$ und $\psi_1=\tau.$
We write     
\[\psi_t'=((\psi_t)_{ij}(0),\p_k (\psi_t)_{ij}(0),\p_l\p_k (\psi_t)_{ij}(0))\in  P(m)^2.\]
The Gram-Schmidt procedure w.r.t. $\psi_t(0)$ transforms $B$ into an orthonormal basis
\[B(t,\psi')=\{b_1(t,\psi'),...,b_m(t,\psi')\}\] with respect to $\psi_t(0)$ such that the maps
\begin{eqnarray*}
[0,1]\times P(m)^2 & \rightarrow & \mb{R}^m \\
(t,\psi') & \mapsto & b_i(t,\psi') \quad i=1,...,m
\end{eqnarray*}
are smooth and $B(1,\psi')=B_\tau.$ Changing to normal coordinates at $0$ w.r.t. $\psi_t$ and 
$B(t,\psi'),$ we obtain a family
\[L(t,\psi'):P(m)^2\rightarrow P(m)^2\]
of linear and invertible transformations with $L(1,\psi')=\tilde{L}.$ 

We claim that $L(t,\psi')$ depends smoothly on $(t,\psi')$
(and so does $L(t,\psi')^{-1}$ by Kramer's rule).

We observe that $(t,\psi',x)\mapsto\psi_t(x)$ is smooth. Let $(t_0,\phi')\in[0,1]\times P(m)^2,$
then $\phi_{t_0}(0)\in P(m).$ We find a neighbourhood $V$ of $(t_0,\phi')$ and a neighbourhood
$U$ of $0\in\mb{R}^m$ such that $\psi_t(x)\in P(m)$ if $(t,\psi',x)\in V\times U.$ 

The Christoffel symbols associated to $\psi_t$ depend smoothly on $(t,\psi')\in V,$ i.e.
\begin{eqnarray*}
V\times U & \rightarrow & \mb{R}\\
(t,\psi',x) & \mapsto & \Gamma_{ij}^k(t,\psi',x)
\end{eqnarray*}
is smooth. Thus we obtain a system
\[\ddot{x}^k+\sum_{i,j=1}^m \Gamma_{ij}^k(t,\psi',x)\dot{x}^i\dot{x}^j=0,\quad k=1,...,m\]
of geodesic equations. 
Hence there exist neighbourhoods $V'$ of $(t_0,\phi')$ and $W$ of $0\in T_0\mb{R}^m$
such that 
\begin{eqnarray*}
V'\times W & \rightarrow & \mb{R}^m\\
(t,\psi',v) & \mapsto & \exp_0(t,\psi',v)
\end{eqnarray*}
is smooth and our claim follows.

We write $\tilde{\psi}'=L(0,\psi')(\psi')$ and define a smooth map
\begin{eqnarray*}
h: [0,1]\times P(m)^2 & \rightarrow & P(m)^2\\
        (s,\psi') & \mapsto & s\cdot\tilde{\tau}'+(1-s)\cdot\tilde{\psi}'.
\end{eqnarray*}
We set
\begin{eqnarray*}
G: [0,1]\times P(m)^2 & \rightarrow & P(m)^2\\
    (t,\psi') & \mapsto & L(t,\psi')^{-1}(h(t,\psi'))
\end{eqnarray*}
and obtain a smooth map which satisfies
\[G(0,\psi')=L(0,\psi')^{-1}(h(0,\psi'))=L(0,\psi')^{-1}(\tilde{\psi}')=\psi'\]
and
\[G(1,\psi')=L(1,\psi')^{-1}(h(1,\psi'))=\tilde{L}^{-1}(\tilde{\tau}')=\tau'.\]
In case $\psi'=\tau'$ it follows that $\psi_t=\tau$ and $L(t,\tau')=\tilde{L}$ 
independent of $t\in[0,1].$ Thus, $h(s,\tau')=\tilde{\tau}'$ for all $s\in[0,1]$ and
$G(t,\tau')=\tilde{L}^{-1}(\tilde{\tau}')=\tau'$ for all $t\in[0,1].$ In other words,
$G$ is a deformation retraction of $P(m)^2$ to $\tau'\in F_a.$ 

We claim that the restriction $G|_{[0,1]\times F_a}$ is a deformation retraction of $F_a$
to $\tau'\in F_a.$

Now assume $\psi'\in F_a.$ It follows that $\tilde{\psi}'\in F_a$ is of the form
$(\delta_{ij},0,\tilde{\psi}_{ijkl})$ as well as $\tilde{\tau}'=(\delta_{ij},0,\tilde{\tau}_{ijkl})\in F_a$
and $h(s,\psi')=(\delta_{ij},0,s\cdot\tilde{\tau}_{ijkl}+(1-s)\cdot\tilde{\psi}_{ijkl}).$

Suppose we have a Riemannian metric $g$ defined near $0\in\mb{R}^m$ which
satisfies $g_{ij}(0)=\delta_{ij}$ and $\p_k g_{ij}(0)=0.$ Then an elementary calculation 
shows
\[R_{ijks}(0)=
\frac{1}{2}\left(\p_j\p_k g_{si}-\p_j \p_s g_{ik}-\p_i\p_k g_{sj}+\p_i\p_s g_{jk} \right)(0).\]
Thus we conclude $h(s,\psi')\in F_a$ for all $s\in[0,1].$ Taking into account that the transformations
$L(t,\psi')$ are induced by coordinate changes we have 
\[G(t,\psi')=L(t,\psi')^{-1}(h(t,\psi'))\in F_a\]
for all $t\in[0,1].$ 

It follows that $F_a$ is contractible and by elementary obstruction theory there 
exists a global section of $q^2:\mc{R}_{0,a}\rightarrow M,$ i.e. a formal solution.
\end{proof}

\end{document}